\newtheorem{thm}{Theorem}[section] %the resolution could also be [subsection]
\newtheorem{cor}[thm]{Corollary}
\newtheorem{exmpl}[thm]{Example}
\newtheorem{prop}[thm]{Proposition}
\newtheorem{rem}[thm]{Remark}
\newcommand\operA[2]{{\if!#2!\operatorname{#1}\else{\operatorname{#1}_{#2}^{\phantom{I}}}\fi}} % To be used within Bdefs. Usage: $\operA{N}{K/F}$ produces $N_{K/F}$; $\operA{N}{}$ produces $N$.
\newcommand\Cref[1]{{Corollary~\ref{#1}}}%
\newcommand{\Trace}[1][]{\if!#1!\operatorname{Tr}\else{\operatorname{Tr}_{#1}^{\phantom{I}}}\fi} % Usage: $\Tr[K/F](a)$.
\long\def\forget#1\forgotten{{}} %
\def\({\left(}
\def\){\right)}
\newif\iffurther
\newif\ifXY % turns XY version on/off
\journal{??}
\begin{document}

\begin{frontmatter}

\title{Standard Polynomial Equations over Division Algebras}
\author{Adam Chapman}
\ead{adam1chapman@yahoo.com}
\author{Casey Machen}
\ead{machenc@gmail.com}
\address{Department of Mathematics, Michigan State University, East Lansing, MI 48824}

\begin{abstract}
Given a central division algebra $D$ of degree $d$ over a field $F$, we associate to any standard polynomial $\phi(z)=z^n+c_{n-1} z^{n-1}+\dots+c_0$ over $D$ a ``companion polynomial" $\Phi(z)$ of degree $nd$ with coefficients in $F$. The roots of $\Phi(z)$ in $D$ are exactly the set of conjugacy classes of the roots of $\phi(z)$. When $D$ is a quaternion algebra, we explain how all the roots of $\phi(z)$ can be recovered from the roots of $\Phi(z)$. Along the way, we are able to generalize a few known facts from $\mathbb{H}$ to any division algebra. The first is the connection between the right eigenvalues of a matrix and the roots of its characteristic polynomial. The second is the connection between the roots of a standard polynomial and left eigenvalues of the companion matrix.
\end{abstract}

\begin{keyword}
Polynomial Equations, Division Algebras, Division Rings, Right Eigenvalues, Left Eigenvalues
\MSC[2010] primary 16K20; secondary 15A18, 15B33, 11R52
\end{keyword}

\end{frontmatter}

\section{Introduction}

Let $F$ be a field. A \emph{central division algebra} is a division algebra $D$ which is of finite dimension over its center $F$. The dimension $[D:F]$ is a square, and the integer $d=\sqrt{[D:F]}$ is the \emph{degree} of $D$ over $F$.

Throughout this paper, we consider \emph{standard polynomials} over $D$. These are monic polynomials with coefficients appearing only on the left-hand side of the variable:
$$
\phi(z)=z^n+c_{n-1} z^{n-1}+\dots+c_0
$$
where $c_i\in D$. %Since $D$ is noncommutative, requiring that the coefficients be on the left-hand side of the variables is a significant restriction. 
By a \emph{root} of a standard polynomial $\phi$ over $D$ we mean an element $\lambda \in D$ satisfying $\phi(\lambda)=0$.
We are interested in finding these roots. %Occasionally, all the coefficients $c_i$ may in a subfield $K$ of $D$. Even in this situation, unless otherwise noted, ``roots" will still mean elements $x\in D$ for which $\phi(x)=0$., not elements $y\in\overline{K}$ for which $\phi(y)=0$.

One of the earliest papers to explore standard polynomials is \cite{Niven1941}. Niven proves that when $D$ is a real quaternion algebra and $F$ is a real closed field, every standard polynomial has a root in $D$. A good survey on polynomial equations over more general division rings can be found in \cite{GordonMotzkin1965}.
 
The known results when it comes to polynomial equations over quaternion algebras can be outlined as follows: In \cite{HuangSo2002} a formula was given for the roots of any quadratic (i.e. $n=2$) standard polynomial over $\mathbb{H}$. In \cite{Abrate2009} this formula was generalized to any quaternion algebra over fields of characteristic not 2, and in \cite{Chapman2015} to quaternion algebras over fields of characteristic 2 as well. In \cite{SerodioPereiraVitoria2001} it was shown that the roots of any standard polynomial of degree $n$ over $\mathbb{H}$ are also roots of the ``companion polynomial".
%, where roots are considered in $\mathbb{H}$ as well. 
This polynomial is the characteristic polynomial of the companion matrix, which is a polynomial of degree $2n$ with coefficients in $\mathbb{R}$. In \cite{JanovskaOpfer2010} it was shown how the roots of the original polynomial can be recovered from the roots of the companion polynomial.

In this paper we generalize the result from \cite{SerodioPereiraVitoria2001} to any central division algebra. In the special case of quaternion algebras, it provides a method for finding all the roots of a given standard polynomial. We make use of the theory of left and right eigenvalues, which was studied extensively for matrices over $\mathbb{H}$ and here we present some generalizations to any central division algebra.

\section{Right Eigenvalues of Matrices over Central Division Algebras}

Let $D$ be a central division algebra of degree $d$ over $F$ and let $n$ be a positive integer.
We write $M_n(D)$ for the ring of $n \times n$ matrices over $D$, and $D^n$ for the column vector space of dimension $n$ over $D$.
The vector space $D^n$ is both a left $M_n(D)$-module and a $D$ bi-module.

A right eigenvalue of a matrix $A$ in $M_n(D)$ is an element $\lambda \in D$ for which there exists a vector $v \in D^n$ satisfying $A v=v \lambda$. A left eigenvalue is defined in a similar way, just with $A v=\lambda v$.

The ring $M_n(D)$ can be identified with the ring of endomorphisms of $D^n$ as a right $D$-module.
Let $K$ be a maximal subfield of $D$.
The ring $D$ can be viewed as a vector space over $K$, and in particular $[D:K]=d$.
Therefore, $D^n$ (as a right $D$-module) can be identified with the vector space $K^{n d}$.
Let $g : D^n \rightarrow K^{n d}$ be a $K$-vector space isomorphism.
Note that $g(v \lambda)=g(v) \lambda$ for any $v \in D^n$ and $\lambda \in K$.
Via this identification, each element in $M_n(D)$ can be identified with a $K$-linear endomorphism of $K^{n d}$.
This gives rise to a $K$-linear map
$f : M_n(D) \rightarrow M_{n d}(K)$, which is an embedding.
Note that for any $A \in M_n(D)$ and $v \in D^n$, we have $g(A v)=f(A) g(v)$. This can be expressed in the following commutative diagram:
$$
\begin{diagram}
D^n		&\rTo^{g}		&K^{nd}	\\
\dTo^{\_ \cdot A}	&			&\dTo_{\_ \cdot f(A)} \\
D^n		&\rTo^{g}		&K^{nd}.
\end{diagram}
$$

For each $A\in M_n(D)$, the \emph{characteristic polynomial} of $A$ is
$$
\Phi_A(z) := \det\big(f(A)-zI\big)
$$
where $I$ is the identity matrix in $M_{n d}(K)$. The coefficients of $\Phi_A(z)$ are known to lie in $F$. Furthermore, they are independent of both the choice of maximal subfield $K$ of $D$ as well as choice of isomorphism $g:D^n\to K^{nd}$ (see for example \cite[Section 4.5]{GilleSzamuely2006}).

We say that two elements $d_1$ and $d_2$ in $D$ are \emph{conjugate} if there exists some nonzero $q \in D$ such that $d_1=q d_2 q^{-1}$. The conjugacy class of an element in $D$ is the set of all its conjugates.

It was shown in \cite{Lee1949} that the conjugacy classes of right eigenvalues of an $n \times n$ matrix $A$ over $\mathbb{H}=\mathbb{R}+i \mathbb{R}+j \mathbb{R}+i j \mathbb{R}$ are the conjugacy classes of roots of the complex roots of the characteristic polynomial of
$\left(\begin{array}{rr}
A_1 & -\overline{A_2}\\
A_2 & \overline{A_1}
\end{array}\right)$
where $A_1$ and $A_2$ are the $n \times n$ matrices over $\mathbb{R}(i)$($\cong \mathbb{C}$) satisfying $A=A_1+j A_2$.
We now generalize this fact to any central division algebra.

\begin{thm}\label{right}
Let $D$ be a central division $F$-algebra of degree $d$, and $n$ be a positive integer.
Let $A \in M_n(D)$ and $\lambda \in D$. Then $\lambda$ is a right eigenvalue of $A$ if and only if $\Phi_A(\lambda)=0$.
\end{thm}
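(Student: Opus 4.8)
The plan is to reduce to the commutative case by choosing the maximal subfield $K$ so that it contains $\lambda$, and then to read off both conditions from the commutative diagram relating $g$, $f(A)$, and multiplication by $A$.

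First I would check that every $\lambda\in D$ lies in \emph{some} maximal subfield of $D$. Since $F(\lambda)$ is a (commutative) subfield of $D$, the double centralizer theorem makes $C_D(F(\lambda))$ a finite-dimensional division algebra with center $F(\lambda)$; any maximal subfield $K$ of $C_D(F(\lambda))$ then contains $F(\lambda)\ni\lambda$ and has degree $d$ over $F$, so it is a maximal subfield of $D$. Because the coefficients of $\Phi_A(z)$ lie in $F$ and do not depend on which maximal subfield is used to define $f$, I may carry out the whole computation with this particular $K$; hence there is no loss of generality in assuming $\lambda\in K$.

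With $\lambda\in K$ the two implications are immediate. If $Av=v\lambda$ with $v\in D^n$ nonzero, then applying $g$ and using $g(Av)=f(A)g(v)$ together with $g(v\lambda)=g(v)\lambda$ gives $f(A)g(v)=g(v)\lambda=\lambda g(v)$, the last step because $\lambda$ and the entries of $g(v)$ all lie in the commutative field $K$; since $g$ is an isomorphism, $g(v)\neq 0$, so $\lambda$ is an ordinary eigenvalue of $f(A)\in M_{nd}(K)$ and therefore $\Phi_A(\lambda)=\det(f(A)-\lambda I)=0$. Conversely, if $\Phi_A(\lambda)=0$ then $\lambda$ is an eigenvalue of $f(A)$ over $K$, say $f(A)w=\lambda w$ with $w\in K^{nd}$ nonzero; setting $v=g^{-1}(w)$ and running the same equalities backwards gives $g(Av)=g(v\lambda)$, hence $Av=v\lambda$ by injectivity of $g$, so $\lambda$ is a right eigenvalue of $A$.

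The only step requiring genuine care is the reduction to $\lambda\in K$, which uses both that $\lambda$ embeds into a maximal subfield and that $\Phi_A$ is independent of that choice (the latter being precisely the fact recalled just before the theorem); note also that the expression $\det(f(A)-\lambda I)$ only typechecks once $\lambda\in K$, which is why this reduction is essential rather than cosmetic. As a consistency check one can observe that both conditions are invariant under replacing $\lambda$ by a conjugate $q^{-1}\lambda q$ — for right eigenvalues because $A(vq)=(vq)(q^{-1}\lambda q)$, and for the other because $\Phi_A$ has central coefficients, so $\Phi_A(q^{-1}\lambda q)=q^{-1}\Phi_A(\lambda)q$ — which is exactly what the reduction exploits.
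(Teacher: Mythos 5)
Your proposal is correct and follows essentially the same route as the paper's proof: pick a maximal subfield $K$ containing $\lambda$, transport the eigenvalue equation through $g$ and $f$, and use the independence of $\Phi_A$ from the choice of $K$. The extra details you supply (the double centralizer argument that $\lambda$ lies in some maximal subfield, and the conjugation-invariance check) are correct refinements of steps the paper leaves implicit.
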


\begin{proof}
Let $K$ be a maximal subfield of $D$ containing $\lambda$.
Let $g : D^n \rightarrow K^{n d}$ and $f : M_n(D) \rightarrow M_{n d}(K)$ be as defined above.

Assume $\lambda$ is a right eigenvalue of $A$.
Then there exists a nonzero vector $v \in D^n$ such that $A v=v \lambda$. Then $f(A) g(v)=g(A v)=g(v \lambda)=g(v) \lambda$. The element $\lambda$ is an eigenvalue of $f(A)$ (in the classical sense), and so a root of $\Phi_A(z)$.

Assume $\Phi_A(\lambda)=0$.
Then $\lambda$ is an eigenvalue of $f(A)$, so there exists a nonzero vector $w \in K^{n d}$ such that $f(A) w=w \lambda$.
Let $v=g^{-1}(w)$. Then $A v=v \lambda$, hence $\lambda$ is a right eigenvalue of $A$.
\end{proof}

Note that since $\Phi_A(z)$ has coefficients in $F$, for each root $\lambda\in D$ of $\Phi_A(z)$, all of its conjugates are roots as well. So one can consider the roots of $\Phi_A(z)$ as a collection of conjugacy classes of elements of $D$.
Each such conjugacy class corresponds to the isomorphism class of a finite field extension $F(\lambda)/F$, which can be identified with a subfield of a fixed algebraic closure $\overline{F}$ of $F$.
Therefore, in order to find all the roots of $\Phi_A(z)$ that lie in $D$, one can solve it as a polynomial over $\overline{F}$, and then for each root $\lambda\in\overline{F}$ check whether $F(\lambda)$ is a subfield of $D$. If so, $\lambda$ is a root of $\Phi_A(z)$ in $D$, and otherwise it is not.

\section{Standard Polynomials and Left Eigenvalues of the Companion Matrix}

Let $\phi(z)=z^n+c_{n-1} z^{n-1}+\dots+c_0$ be a standard polynomial with coefficients $c_0,\dots,c_{n-1}$ in $D$. We want to find the roots of $\phi(z)$ in $D$, i.e. all the elements $\lambda \in D$ satisfying $\phi(\lambda)=0$.

Given such a polynomial, we define its companion matrix to be
$$C_\phi=\left(\begin{array}{rrrrr}
0 & 1 & 0 & \dots & 0\\
0 & 0 & 1 & & 0\\
\vdots & & & \ddots & \\
0 &0 & \dots & 0& 1\\
-c_0 & -c_1 & \dots &-c_{n-2} & -c_{n-1}
\end{array}\right).$$

\begin{thm}\label{Comp}
The roots of $\phi(z)$ are exactly the left eigenvalues of $C_\phi$. Furthermore, given such a left eigenvalue $\lambda$ of the companion matrix $C_\phi$, the vector $v=\left(\begin{array}{r} 1 \\ \lambda \\ \vdots\\ \lambda^{n-1} \end{array}\right)$ is a corresponding left eigenvector, i.e. a vector satisfying $C_\phi v=\lambda v$.
\end{thm}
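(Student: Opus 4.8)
The plan is to verify directly that the proposed vector $v=(1,\lambda,\dots,\lambda^{n-1})^T$ satisfies $C_\phi v=\lambda v$ precisely when $\phi(\lambda)=0$, and then argue that every left eigenvector of $C_\phi$ forces this shape. First I would compute $C_\phi v$ row by row. For the first $n-1$ rows, the $i$-th row of $C_\phi$ has a single $1$ in position $i+1$, so the $i$-th entry of $C_\phi v$ is $\lambda^i$, which matches the $i$-th entry $\lambda\cdot\lambda^{i-1}=\lambda^i$ of $\lambda v$ automatically, for any $\lambda$. The only nontrivial condition comes from the last row: $(C_\phi v)_n = -c_0\cdot 1 - c_1\lambda - \dots - c_{n-1}\lambda^{n-1}$, and we need this to equal $(\lambda v)_n = \lambda\cdot\lambda^{n-1}=\lambda^n$. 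Rearranging, this is exactly $\lambda^n + c_{n-1}\lambda^{n-1}+\dots+c_0 = 0$, i.e. $\phi(\lambda)=0$. Care is needed here only to keep the coefficients on the correct (left) side: in the last row the entries $-c_i$ multiply the vector entries $\lambda^i$ on the left, which is consistent with the standard-polynomial convention, so no subtlety arises.

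This establishes the "furthermore" part and simultaneously shows that every root of $\phi$ is a left eigenvalue of $C_\phi$. For the converse — that every left eigenvalue of $C_\phi$ is a root of $\phi$ — I would start from an arbitrary nonzero $w=(w_1,\dots,w_n)^T$ with $C_\phi w=\lambda w$. Reading off the first $n-1$ rows gives $w_{i+1}=\lambda w_i$ for $i=1,\dots,n-1$, hence $w_i=\lambda^{i-1}w_1$ for all $i$. If $w_1=0$ then all $w_i=0$ except possibly through these relations they are all $0$, contradicting $w\neq 0$; so $w_1\neq 0$, and then $w=(w_1,\lambda w_1,\dots,\lambda^{n-1}w_1)^T = v\,w_1$ where $v$ is the claimed eigenvector. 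The last row now reads $-\sum_{i=0}^{n-1} c_i \lambda^i w_1 = \lambda^n w_1$; since $w_1\neq 0$ and $D$ is a division algebra we may cancel $w_1$ on the right, yielding $\phi(\lambda)=0$.

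I expect no serious obstacle here; the statement is essentially a bookkeeping identity, and the main point requiring attention is simply to respect the noncommutativity — keeping all polynomial coefficients $c_i$ acting on the left and all powers of $\lambda$ and the scalar $w_1$ acting on the right throughout, and invoking that $D$ has no zero divisors to justify the final cancellation of $w_1$. One could present the two directions together: the computation of $C_\phi v$ in the first paragraph does double duty, proving both that $v$ is an eigenvector when $\phi(\lambda)=0$ and (by running the equalities backwards) that $\phi(\lambda)=0$ whenever $C_\phi v=\lambda v$, while the uniqueness-of-shape argument reduces the general left-eigenvector case to this one.
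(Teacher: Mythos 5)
Your proposal is correct and follows essentially the same route as the paper's own proof: verify directly that $v=(1,\lambda,\dots,\lambda^{n-1})^T$ satisfies $C_\phi v=\lambda v$ exactly when $\phi(\lambda)=0$, and for an arbitrary left eigenvector read off $w_{i+1}=\lambda w_i$ from the first $n-1$ rows, deduce $w_1\neq 0$, and cancel $w_1$ on the right in the last row. The bookkeeping of left coefficients and right scalars is handled the same way in both arguments.
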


\begin{proof}
The element $\lambda \in D$ is a left eigenvalue of $C_\phi$ if and only if there exists a nonzero vector 
$$v=\left(\begin{array}{r}
v_1\\
\vdots\\
v_n
\end{array}\right) \in D^n
$$ satisfying $C_\phi v=\lambda v$.
This equality is equivalent to the system
\begin{eqnarray*}
v_2 & = & \lambda v_1\\
\vdots\\
v_n & = & \lambda v_{n-1}\\
-c_0 v_1-\dots-c_{n-1} v_n & = & \lambda v_n
\end{eqnarray*}
Note that since $v\ne0$, $v_1\ne0$.
The first $n-1$ equations mean that $v$ is $\left(\begin{array}{r} 1 \\ \lambda \\ \vdots\\ \lambda^{n-1} \end{array}\right)v_1$ and the last equation then becomes 
$$(c_0+c_1 \lambda+\dots+c_{n-1} \lambda^{n-1}+\lambda^n) v_1=0.$$
By dividing by $v_1$ from the right, we obtain that $\lambda$ is a root of $\phi(z)$.

In the other direction, it is straight-forward to see that for any $\lambda \in D$ satisfying $c_0+c_1 \lambda+\dots+c_{n-1} \lambda^{n-1}+\lambda^n=0$, 
$$C_\phi \left(\begin{array}{r} 1 \\ \lambda \\ \vdots\\ \lambda^{n-1} \end{array}\right)=\lambda \left(\begin{array}{r} 1 \\ \lambda \\ \vdots\\ \lambda^{n-1} \end{array}\right)$$
which means that $\lambda$ is a left eigenvalue of $C_\phi$.
\end{proof}

\begin{rem}
Similar connections between the polynomial and its companion matrix were also pointed out in \cite{LamLeroyOzturk2008}.
The fact that every root of $\phi(z)$ is also a left eigenvalue of $C_\phi$ can also be obtained as a result of Lemma 4.7 in that paper.
\end{rem}

\begin{cor}\label{leftright}
Every left eigenvalue of $C_\phi$ is also a right eigenvalue. 
\end{cor}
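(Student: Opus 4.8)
The plan is to produce, for each left eigenvalue $\lambda$ of $C_\phi$, an explicit right eigenvector — in fact the very same vector that Theorem~\ref{Comp} already supplies as a left eigenvector. So the first step is to invoke Theorem~\ref{Comp}: if $\lambda$ is a left eigenvalue of $C_\phi$, then the (nonzero) vector $v=(1,\lambda,\dots,\lambda^{n-1})^{\mathrm T}$ satisfies $C_\phi v=\lambda v$.

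The only thing that then needs to be observed is that this particular $v$ also satisfies $v\lambda=\lambda v$. Comparing entries, this amounts to the identities $\lambda^{i-1}\lambda=\lambda\,\lambda^{i-1}$ for $i=1,\dots,n$, which hold because all powers of $\lambda$ lie in the commutative subfield $F(\lambda)$ of $D$ and hence commute with $\lambda$. Combining the two equalities gives $C_\phi v=\lambda v=v\lambda$ with $v\neq 0$, which is precisely the assertion that $\lambda$ is a right eigenvalue of $C_\phi$ (with eigenvector $v$), completing the proof.

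I do not expect a genuine obstacle here: the substance is just the remark that the companion-type vector $(1,\lambda,\dots,\lambda^{n-1})^{\mathrm T}$ is simultaneously a left and a right eigenvector of $C_\phi$ — a coincidence special to companion matrices, since for an arbitrary $A\in M_n(D)$ a left eigenvalue need not be a right eigenvalue at all. One could alternatively try to route the proof through Theorems~\ref{Comp} and \ref{right} (left eigenvalue $\Rightarrow$ root of $\phi$, then root of $\Phi_{C_\phi}$, then right eigenvalue), but that would first require establishing that the roots of $\phi$ are roots of $\Phi_{C_\phi}$, so the direct eigenvector argument is the cleaner path and is the one I would take.
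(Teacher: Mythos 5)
Your proof is correct and is essentially identical to the paper's: both invoke Theorem~\ref{Comp} to obtain the explicit left eigenvector $v=(1,\lambda,\dots,\lambda^{n-1})^{\mathrm T}$ and then note that $v\lambda=\lambda v$ since all entries of $v$ are powers of $\lambda$. Your write-up just makes the commutation step slightly more explicit than the paper does.
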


\begin{proof}
Let $\lambda$ be a left eigenvector. Then $\left(\begin{array}{r} 1 \\ \lambda \\ \vdots\\ \lambda^{n-1} \end{array}\right)$ is the corresponding eigenvector. Now
$$\left(\begin{array}{r} 1 \\ \lambda \\ \vdots\\ \lambda^{n-1} \end{array}\right) \lambda=\lambda \left(\begin{array}{r} 1 \\ \lambda \\ \vdots\\ \lambda^{n-1} \end{array}\right).$$
\end{proof}

\begin{cor}\label{vice}
Every right eigenvalue of $C_\phi$ is conjugate to some left eigenvalue of $C_\phi$.
\end{cor}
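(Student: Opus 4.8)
The plan is to adapt the row-by-row computation from the proof of \Tref{Comp}, replacing the left-eigenvector equation $C_\phi v=\lambda v$ by the right-eigenvector equation $C_\phi v=v\lambda$. So I would start with a right eigenvalue $\lambda$ of $C_\phi$ and a nonzero right eigenvector $v\in D^n$ with entries $v_1,\dots,v_n$, so that $C_\phi v=v\lambda$. Writing out this equality row by row gives $v_{k+1}=v_k\lambda$ for $1\le k\le n-1$, hence $v_k=v_1\lambda^{k-1}$ for all $k$, together with the bottom-row relation $-c_0v_1-c_1v_2-\dots-c_{n-1}v_n=v_n\lambda$. Since $v\ne 0$ and every $v_k$ is a right $D$-multiple of $v_1$, we must have $v_1\ne 0$.

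The next step is to conjugate by $v_1$. Put $\mu=v_1\lambda v_1^{-1}$; then $v_1\lambda^k=\mu^k v_1$ for every $k\ge 0$. Substituting $v_k=v_1\lambda^{k-1}=\mu^{k-1}v_1$ (for $1\le k\le n$) and $v_n\lambda=v_1\lambda^n=\mu^n v_1$ into the bottom-row relation and collecting $v_1$ on the right turns it into $(c_0+c_1\mu+\dots+c_{n-1}\mu^{n-1}+\mu^n)v_1=0$, i.e. $\phi(\mu)v_1=0$. Dividing by $v_1$ from the right yields $\phi(\mu)=0$, so by \Tref{Comp} the element $\mu$ is a left eigenvalue of $C_\phi$; and since $\lambda=v_1^{-1}\mu v_1$, the right eigenvalue $\lambda$ is conjugate to the left eigenvalue $\mu$, which is exactly the assertion.

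I do not expect a genuine obstacle here — the argument is short and essentially self-contained. The only point that needs care is the one-sided arithmetic: the entries of a right eigenvector force the ``coefficients on the left, powers of $\lambda$ on the right'' shape of the bottom-row relation, and it is precisely the identity $v_1\lambda^k=\mu^k v_1$ that converts this expression into an honest evaluation $\phi(\mu)$ of the standard polynomial. It is also worth remarking that, combined with \Cref{leftright}, this shows the sets of left and right eigenvalues of $C_\phi$ meet every conjugacy class of $D$ in the same way; equivalently, the roots of $\phi(z)$ and the roots of $\Phi_{C_\phi}(z)$ fill out the same conjugacy classes in $D$.
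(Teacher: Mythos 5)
Your proposal is correct and follows essentially the same route as the paper's own proof: write out $C_\phi v=v\lambda$ row by row, observe $v_1\ne 0$, and conjugate by $v_1$ to turn the bottom-row relation into $\phi(v_1\lambda v_1^{-1})=0$. The only difference is that you make explicit the identity $v_1\lambda^k=\mu^k v_1$, which the paper leaves implicit.
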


\begin{proof}
Assume $\lambda$ is a right eigenvalue.
Then there exists a nonzero vector 
$$v=\left(\begin{array}{r}
v_1\\
\vdots\\
v_n
\end{array}\right) \in D^n
$$ satisfying $C_\phi v=v \lambda$.
From this equality we obtain the system
\begin{eqnarray*}
v_2 & = & v_1 \lambda\\
\vdots\\
v_n & = & v_{n-1} \lambda\\
-c_0 v_1-\dots-c_{n-1} v_n & = &v_n \lambda
\end{eqnarray*}
Substituting the first $n-1$ equations in the last one we obtain
$$c_0 v_1+c_1 v_1 \lambda+\dots+c_{n-1} v_1 \lambda^{n-1}+v_1 \lambda^n=0.$$
Recall that since $v$ is nonzero, $v_1$ is nonzero.
Take $\lambda'=v_1 \lambda v_1^{-1}$.
Then $\lambda'$ is a root of $\phi(z)$ and so a left eigenvalue of $C_\phi$
\end{proof}

Let the characteristic polynomial of $C_\phi$ be denoted by $\Phi(z)$.
We call it the \emph{companion polynomial} of $\phi(z)$.
Note that it is of degree $n d$ and has coefficients in $F$.

\begin{rem}
For a quaternionic (i.e. degree 2) division algebra, it is not mentioned explicitly in \cite{JanovskaOpfer2010}, but the companion polynomial
$q(z)=z^{2 n}+(\overline{c_{n-1}}+c_{n-1}) z^{2n-1}+\dots$ associated to any polynomial $p(z)=z^n+c_{n-1} z^{n-1}+\dots$ is indeed the characteristic polynomial of the companion matrix.
This can be easily verified by a straight-forward computation: take the companion matrix $C_p$, write it as $A_1+j A_2$ for the appropriate $A_1,A_2 \in M_n(\mathbb{R}(i))$ and compute the characteristic polynomial of $$\left(\begin{array}{rr}
A_1 & -\overline{A_2}\\
A_2 & \overline{A_1}
\end{array}\right).$$
\end{rem}

\begin{thm}\label{inc}
The roots of $\phi(z)$ are also roots of $\Phi(z)$, and each conjugacy class of roots of $\Phi(z)$ contains a root of $\phi(z)$.
\end{thm}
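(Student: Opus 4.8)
The plan is to combine Theorem~\ref{Comp}, Corollary~\ref{leftright}, Corollary~\ref{vice} and Theorem~\ref{right} applied to the companion matrix $C_\phi$. The key observation is that $\Phi(z)$ is by definition the characteristic polynomial $\Phi_{C_\phi}(z)$ of $C_\phi$, so Theorem~\ref{right} tells us that the roots of $\Phi(z)$ lying in $D$ are exactly the right eigenvalues of $C_\phi$.

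\emph{First} I would prove that every root of $\phi(z)$ is a root of $\Phi(z)$. Let $\lambda\in D$ be a root of $\phi(z)$. By Theorem~\ref{Comp}, $\lambda$ is a left eigenvalue of $C_\phi$; by Corollary~\ref{leftright} it is then also a right eigenvalue of $C_\phi$; and by Theorem~\ref{right} (with $A=C_\phi$) this is equivalent to $\Phi(\lambda)=\Phi_{C_\phi}(\lambda)=0$. Hence $\lambda$ is a root of $\Phi(z)$.

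\emph{Second} I would show each conjugacy class of roots of $\Phi(z)$ contains a root of $\phi(z)$. Take any root $\mu\in D$ of $\Phi(z)$. By Theorem~\ref{right}, $\mu$ is a right eigenvalue of $C_\phi$. By Corollary~\ref{vice}, $\mu$ is conjugate to some left eigenvalue $\lambda$ of $C_\phi$, and by Theorem~\ref{Comp} that $\lambda$ is a root of $\phi(z)$. Thus the conjugacy class of $\mu$ contains the root $\lambda$ of $\phi(z)$, as desired.

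\emph{The main point to be careful about} is simply that the statement should be read over $D$ throughout: ``roots of $\Phi(z)$'' here means roots of $\Phi(z)$ that lie in $D$, and ``conjugacy class'' is within $D$; with that reading each step above is an immediate citation of the preceding results, so there is really no substantial obstacle — the work has already been done in establishing Theorems~\ref{right} and~\ref{Comp} and their corollaries. One should perhaps remark that for the second part, it suffices that $\Phi(z)$ has its coefficients in $F$, so a whole conjugacy class of $D$ is either contained in or disjoint from its root set; this was already noted after Theorem~\ref{right} and makes the phrasing ``each conjugacy class of roots'' meaningful.
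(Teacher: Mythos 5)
Your argument is exactly the paper's proof: Theorem~\ref{Comp} $\Rightarrow$ Corollary~\ref{leftright} $\Rightarrow$ Theorem~\ref{right} for the first assertion, and Corollary~\ref{vice} (with Theorem~\ref{right} and Theorem~\ref{Comp}) for the second. The proposal is correct and takes essentially the same route, just with the citations spelled out slightly more explicitly.
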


\begin{proof}
By Theorem \ref{Comp} the roots of $\phi(z)$ are left eigenvalues of the companion matrix.
By Corollary \ref{leftright} those left eigenvalues are also right eigenvalues, and by Theorem \ref{right} the right eigenvalues are roots of the companion polynomial.
The second assertion is immediate from Corollary \ref{vice}.
\end{proof}

\section{Standard Polynomials over Quaternion Algebras}

Let $\phi(z)$ be again a standard polynomial over a central division algebra $D$ of degree $d$ over $F$. The algebra $D$ is called a \emph{quaternion algebra} if $d=2$. We begin this section with a general discussion, and we conclude by explaining how the roots of $\phi(z)$ can be recovered from the roots of its companion polynomial $\Phi(z)$ in the case that $D$ is a quaternion algebra.

Every element $\lambda \in D$ has a characteristic polynomial,
which means that 
$\lambda^d+b_{d-1} \lambda^{d-1}+\dots+b_0=0$
for some $b_{d-1},\dots,b_0 \in F$.
Note that the characteristic polynomial of $\lambda$ does not change if we conjugate $\lambda$ by any $q \in D^\times$.
Write $C_{d-1}(z),\dots,C_0(z)$ for the functions from $D$ to $F$ such that
$z^d+C_{d-1}(z) z^{d-1}+\dots+C_0(z)$ is the characteristic polynomial of $z$. We may write $\phi(z)$ as $\psi_{d-1}(z) z^{d-1}+\dots+\psi_0(z)$ using the functions $C_{d-1}(z),\dots,C_0(z)$.

\begin{exmpl} If $\phi(z)=z^3$ and $d=2$ then 
\begin{eqnarray*}
\phi(z) &= &z^2 \cdot z+c=(-C_1(z) z-C_0(z)) z+c=-C_1(z) z^2-C_0(z) z\\&=&-C_1(z) (-C_1(z) z-C_0(z))-C_0(z) z \\&=&(C_1(z)^2-C_0(z))z+C_1(z) C_0(z)
\end{eqnarray*}
 and so $\psi_1(z)=C_1(z)^2-C_0(z)$ and $\psi_0(z)=C_1(z) C_0(z)$.
 \end{exmpl}

By Theorem \ref{inc}, the roots of $\phi(z)$ are also roots of $\Phi(z)$.
Assuming we have all the roots of $\Phi(z)$ on hand, we want to recover the roots of $\phi(z)$.
The following statement is immediate:

\begin{prop}
Given a root $\lambda$ of $\Phi(z)$, $\lambda$ is a root of $\phi(z)$ if and only if $\lambda$ is a root of the polynomial $\Psi(z)=\psi_{d-1}(\lambda) z^{d-1}+\dots+\psi_0(\lambda)$.
\end{prop}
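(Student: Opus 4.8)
The plan is to show that the proposition is a direct consequence of a single identity valid for \emph{every} $\lambda\in D$, namely $\phi(\lambda)=\psi_{d-1}(\lambda)\lambda^{d-1}+\dots+\psi_0(\lambda)$, which is exactly the defining property of the functions $\psi_0,\dots,\psi_{d-1}$. Once this identity is in place, evaluating the auxiliary polynomial $\Psi(z)=\psi_{d-1}(\lambda)z^{d-1}+\dots+\psi_0(\lambda)$ at $z=\lambda$ reproduces $\phi(\lambda)$, and the ``if and only if'' follows at once.

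First I would make the construction of the $\psi_i$ explicit and check that the resulting identity really does hold pointwise. Fix $\lambda\in D$. By hypothesis $\lambda$ satisfies its characteristic polynomial, so $\lambda^d=-C_{d-1}(\lambda)\lambda^{d-1}-\dots-C_0(\lambda)$ with all $C_j(\lambda)\in F$, hence central in $D$. Multiplying this relation on the left by a power of $\lambda$ and iterating, one rewrites each monomial $\lambda^k$ with $k\ge d$ as $\sum_{i=0}^{d-1}p_{k,i}(\lambda)\lambda^i$, where $p_{k,i}\colon D\to F$ is a fixed polynomial in $C_0(\lambda),\dots,C_{d-1}(\lambda)$; for $k<d$ one simply has $p_{k,i}(\lambda)=\delta_{ki}$. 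Substituting into $\phi(\lambda)=\sum_{k=0}^{n}c_k\lambda^k$ (with $c_n=1$) and moving the central scalars $p_{k,i}(\lambda)$ past the $c_k$ gives $\phi(\lambda)=\sum_{i=0}^{d-1}\psi_i(\lambda)\lambda^i$ with $\psi_i(\lambda)=\sum_{k=0}^{n}p_{k,i}(\lambda)c_k$. The essential observation is that every step here is legitimate after substituting any particular $\lambda$, since it uses only the characteristic-polynomial relation for that $\lambda$ together with the centrality of the scalars produced along the way; this is the step I would be most careful to spell out, even though it is routine.

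With the identity $\phi(\lambda)=\psi_{d-1}(\lambda)\lambda^{d-1}+\dots+\psi_0(\lambda)$ established, the proof concludes immediately: for the fixed $\lambda$ the polynomial $\Psi(z)=\psi_{d-1}(\lambda)z^{d-1}+\dots+\psi_0(\lambda)$ has constant coefficients (depending on $\lambda$), and $\Psi(\lambda)=\psi_{d-1}(\lambda)\lambda^{d-1}+\dots+\psi_0(\lambda)=\phi(\lambda)$. Hence $\phi(\lambda)=0$ if and only if $\Psi(\lambda)=0$, which is the claim. I would also remark that the stated hypothesis ``$\lambda$ is a root of $\Phi(z)$'' is not used in the argument; it merely records the setting of interest, since by \Tref{inc} the roots of $\phi(z)$ already lie among those of $\Phi(z)$, so it is precisely the roots of $\Phi(z)$ that one wants to test against $\Psi(z)$.
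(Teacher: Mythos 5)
Your argument is correct and matches the paper's intent exactly: the paper offers no proof, declaring the proposition ``immediate,'' and what it has in mind is precisely your observation that the rewriting $\phi(\lambda)=\psi_{d-1}(\lambda)\lambda^{d-1}+\dots+\psi_0(\lambda)$ holds pointwise (using the characteristic-polynomial relation for $\lambda$ and the centrality of the $C_j(\lambda)$), so that $\Psi(\lambda)=\phi(\lambda)$. Your remark that the hypothesis that $\lambda$ is a root of $\Phi(z)$ is not logically needed, and only delimits the elements worth testing, is also accurate.
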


In the case $d=2$, the polynomial $\Psi(z)$ defined in the proposition above is either a linear polynomial or a constant.
It cannot be a nonzero constant, because that would mean that no element in the conjugacy class of $\lambda$ is a root of $\phi$, contradictory to Theorem \ref{inc}. We now have a way of finding roots of $\phi(z)$ from the roots of its companion polynomial $\Phi(z)$ when $D$ is a quaternion algebra:

\begin{cor}
Assume $d=2$ and fix a root $\lambda$ of $\Phi(z)$. If $\Psi(z)$ is constantly zero then every element in the conjugacy class of $\lambda$ is a root of $\phi(z)$.
If $\Psi(z)$ is a linear polynomial, then the root of $\phi(z)$ in the conjugacy class of $\lambda$ can be found simply by solving the linear equation $\Psi(z)=0$.
\end{cor}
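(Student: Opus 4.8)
The plan is to deduce everything from the Proposition just proved, pushing it around the conjugacy class of $\lambda$ by exploiting the fact that the coefficient functions in play are conjugation-invariant. The one point of care is that the polynomial denoted $\Psi(z)$ in the Proposition depends on the chosen root; to make this visible I would write $\Psi_\lambda(z)=\psi_1(\lambda)z+\psi_0(\lambda)$ (recall $d=2$).

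The first step is to record the invariance. Each $C_j$ is, by definition, a coefficient of the characteristic polynomial of its argument, and the characteristic polynomial of an element of $D$ is unchanged under conjugation; hence each $C_j$ is constant on conjugacy classes. Since each $\psi_i(z)$ is built from the values $C_0(z),\dots,C_{d-1}(z)$ together with the fixed coefficients of $\phi$, it too is constant on conjugacy classes. So if $\mu$ lies in the conjugacy class of $\lambda$, then $\psi_i(\mu)=\psi_i(\lambda)$ for all $i$, i.e. $\Psi_\mu=\Psi_\lambda$ as polynomials over $D$. Moreover such a $\mu$ is again a root of $\Phi(z)$, because $\Phi$ has coefficients in $F$ and conjugates of a root of an $F$-polynomial are roots; thus the Proposition is applicable to every member of the conjugacy class of $\lambda$.

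For the first assertion, assume $\Psi_\lambda\equiv 0$. For any $\mu$ in the conjugacy class of $\lambda$ we have $\Psi_\mu=\Psi_\lambda\equiv 0$, so in particular $\Psi_\mu(\mu)=0$, and the Proposition gives that $\mu$ is a root of $\phi(z)$; hence the whole conjugacy class consists of roots. For the second assertion, assume $\Psi_\lambda(z)=\psi_1(\lambda)z+\psi_0(\lambda)$ is genuinely linear, that is, $\psi_1(\lambda)\neq 0$. By Theorem~\ref{inc} the conjugacy class of $\lambda$ contains a root $\mu$ of $\phi(z)$; by the Proposition, $\Psi_\mu(\mu)=0$, and since $\Psi_\mu=\Psi_\lambda$ this reads $\psi_1(\lambda)\mu+\psi_0(\lambda)=0$, forcing $\mu=-\psi_1(\lambda)^{-1}\psi_0(\lambda)$. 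In a division algebra, $az+b=0$ with $a\neq 0$ has the unique solution $z=-a^{-1}b$, so this is the only solution of $\Psi_\lambda(z)=0$; the same computation shows every root of $\phi(z)$ conjugate to $\lambda$ equals it, which is what is meant by ``the'' root of $\phi(z)$ in the conjugacy class of $\lambda$.

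I do not expect a genuine obstacle: the mathematical content sits entirely in the Proposition and in the conjugation-invariance of the functions $C_j$. The only thing to get right is the bookkeeping of the $\lambda$-dependence of $\Psi$, so that the equation $\Psi_\lambda(z)=0$ may legitimately be transported across the conjugacy class, together with the elementary remark about linear equations over a division ring that licenses the uniqueness implicit in the word ``the''.
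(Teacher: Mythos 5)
Your proof is correct and follows the same route the paper intends: the corollary is presented there as an immediate consequence of the Proposition together with the conjugation-invariance of the $C_j$ (and hence of the $\psi_i$) and the existence statement in Theorem~\ref{inc}. Your writeup merely makes explicit the $\lambda$-dependence of $\Psi$ and the uniqueness of solutions of linear equations over a division ring, both of which the paper leaves tacit.
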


\section*{Acknowledgements}

We thank Rajesh Kulkarni for his comments on the manuscript and Uzi Vishne and Uriya First for the useful discussions.

\section*{Bibliography}
\bibliographystyle{amsalpha}
\bibliography{bibfile}
\end{document}